\newtheorem{teo}{Theorem}[section]
\newtheorem{proposition}[teo]{Proposition}
\newtheorem{lemma}[teo]{Lemma}
\newtheorem{definition}[teo]{Definition}
\newtheorem{remark}[teo]{Remark}
\newtheorem{example}[teo]{Examples}
\numberwithin{equation}{section}
\begin{document}
\title[Fuglede-Kadison determinants]{Fuglede-Kadison determinants for operators in the von Neumann algebra of an equivalence relation }
%\urladdr{}
\author[Georgescu]{Catalin Georgescu}
\address{Department of Mathematical Sciences, The University of South Dakota, Dakota Hall, 414 East Clark Street, Vermillion, SD 57069}
\email{catalin.georgescu@usd.edu}
%\urladdr{}
\author[Picioroaga]{Gabriel Picioroaga}
\address{Department of Mathematical Sciences, The University of South Dakota, Dakota Hall, 414 East Clark Street, Vermillion, SD 57069}
\email{gabriel.picioroaga@usd.edu}
%\urladdr{}
\thanks{This research was partially supported by The Office of Research and the College of Arts and Sciences of the University of South Dakota under a 2011 Research Excellence Grant.}
\keywords{equivalence relation, von Neumann algebra, Fuglede-Kadison determinant}
\subjclass[2000]{47C15,47A35,47B47}

%\keywords{}
%\subjclass[2000]{}
\date{}

\begin{abstract}
We calculate the Fuglede-Kadison determinant for operators of the form $\sum_{i=1}^n M_{f_i}L_{g_i}$ where $L_{g_i}$ are unitaries or partial isometries coming from Borel (partial) isomorphisms $g_i$ on a probability space which generate an ergodic equivalence relation, and $M_{f_i}$ are multiplication operators. We obtain formulas for the cases when the relation is treeable or the $f_i$'s and $g_i$'s satisfy some restrictions.
\end{abstract}

\maketitle

\section{Introduction.}
\par The determinant of an (invertible) operator was first introduced in 1952 in the paper \cite{FK}. The notion generalizes the usual determinant and can be considered for any operator in a finite von Neumann algebra $(M,\tau)$ with a faithful normal trace. To summarize the construction let
$T\in M$ be normal and $|T|=\sqrt{T^*T}$. By the spectral theorem one can represent $T$ as an integral where $E(\lambda)$ is a projection-valued measure:
$$T=\int_{\sigma(T)}\lambda dE(\lambda)$$
In this setting $\mu_T=\tau\circ E$ becomes a probability measure on the complex plane whose support is the spectrum $\sigma(T)$.
\par For any $T\in M$ the Fuglede-Kadison determinant is defined by :
$$\Delta(T)=\exp\left(\int_{0}^{\infty}\log t \,d\mu_{|T|}\right)$$
Notice that if $T$ is invertible then
$$\Delta(T)=\exp(\tau(\log|T|))$$
where $\log|T|$ comes from functional calculus.
\par The determinant has been used recently in the calculation of Brown measures and applied to the invariant subspace problem by Haagerup and Schultz (see \cite{HSc} where $\Delta$ is defined for a general class of operators), and it has been connected to the entropy of certain algebraic actions in the work of Bowen, \cite{ Bow}, Deninger \cite{Den2} and Kerr and Li \cite{KLi}. We point out the paper \cite{Den1} where the determinant is calculated for certain operators in the von Neumann algebra generated by the action of the group $\mathbf{Z}$ on a probability space. In this paper we will extend the setting in which such computations can be carried out. Thus,  instead of a $\mathbf{Z}$ action, we will consider a standard preserving equivalence relation and the von Neumann algebra it generates by the Feldman-Moore construction (\cite{FMII}). In order to make sure we deal with a finite von Neumann algebra we will require the equivalence relation be ergodic which together with (finite) measure preservation will bring in more, namely a $II_1$ factor. We set out to calculate the determinant for  operators of the form $\sum_{i=1}^N M_{f_i}L_{g_i}$ where the $g_i$ are Borel partial isomorphisms that (partly) generate the equivalence relation and the $f_i$ are bounded. A more detailed set-up follows.
\par Let $(X,\mathcal{B}, \mu)$ be a Borel standard probability space without atoms, $\{A\}_{i \in I}$ and $\{B\}_{i \in I}$ two families of measurable subsets of $X$, and $\Lambda=\{g_i:A_i \to B_i\mbox{ }|\mbox{ }i\in I\}$ a family of measure preserving bijections. Assuming further that the index set $I$ is at most countable let $\mathcal{R}_{\Lambda}$ be the equivalence relation generated by the $g_i$  i.e. $(x,y)\in\mathcal{R}_{\Lambda}$ if and only if $x=y$ or there exists a map $\omega=g_{i_1}^{\epsilon_1}g_{i_2}^{\epsilon_2}\dots g_{i_k}^{\epsilon_k}$  such that the domain of $\omega$ contains $x$ and $\omega x=y$, where all exponents $\epsilon_{t}=\pm 1$. For simplicity we will denote the fact that two elements $x$ and $y$ are equivalent by $x\sim y$. An equivalence relation $\mathcal{R}$ is (SP1) if it  satisfies the properties listed below:
\vspace{0.09in}
\par (S) Almost each orbit $\mathcal{R}[x]=\{y\in X\mbox{, }y\sim x\}$ is at most countable and $\mathcal{R}$ is a Borel set of $X\times X$.
\par (P) For any $T\in Aut(X,\mu)$ such that $\mbox{graph}\,T\subset\mathcal{R}$ we have that $T$ preserves the measure $\mu$.
\vspace{0.09in}\\
It can be shown that an equivalence relation $\mathcal{R}_{\Lambda}$ with $\Lambda$ as above is (SP1). This is needed in Proposition \ref{trace} and Theorem \ref{r1}. \\ 
 If $A$ is measurable we will denote its \textsf{saturation} with respect to $\mathcal{R}$ by $\mathcal{R}[A]=\{y\in X, y\sim x\mbox{ for some }x\in A\}$.
\begin{definition}
The equivalence relation $\mathcal{R}$ is called \textsf{ergodic} if for any measurable set $A \in \mathcal{B}$ with $A=\mathcal{R}(A)$ we have $\mu(A)=0$ or $\mu(A)=1$.
\end{definition}
\begin{definition}
A family of measure preserving bijections  $\Lambda=\{g_i:A_i \to B_i\mbox{ }|\mbox{ }i\in I\}$ is a \textsf{treeing} if 
$\mu\{x\in Dom(\omega) \mid \omega(x)=x \}=0$ for every non trivial reduced word $\omega$  (i.e. $\omega=g_1^{\varepsilon_1}g_2^{\varepsilon_2}\ldots g_k^{\varepsilon_k}$ with $\varepsilon_i=\pm 1$ and if $g_i=g_{i+1}$ then $\varepsilon_i=\varepsilon_{i+1}$) with domain $Dom(\omega)$. 
An equivalence relation $\mathcal{R}$ is called \textsf{treeable} if there exists a treeing $\Lambda$ such that $\mathcal{R}=\mathcal{R}_{\Lambda}$.  
\end{definition}
The Feldman-Moore construction is based on the following Hilbert space (see \cite{FMII} ) :
\[
L^2(\mathcal{R})=\{\Psi : \mathcal{R} \to \mathbb{C} \quad \text{such that}\quad ||\Psi||_2<\infty \}
\]
where the norm $||\cdot||_2$ comes from the scalar product:
\[
<\Psi,\Phi>= \int_X \sum_{z\sim x} \Psi(x,z) \overline{\Phi(x,z)} d\mu(x)
\]
For $\omega$ a reduced word and $f \in L^{\infty}(X)$ we consider the operators\\ $L_{\omega},M_f:L^2(\mathcal{R}) \to L^2(\mathcal{R})$
\begin{equation}
(L_{\omega}\Psi)(x,y)=\chi_{D_{\omega}}(x)\Psi(\omega^{-1}x,y),\quad D_{\omega}\mbox{ the domain of }\omega^{-1}
\end{equation}
\begin{equation}
(M_f\Psi)(x,y)=f(x)\Psi(x,y)
\end{equation}
If $\omega$ is defined on almost all of $X$ then $L_{\omega}$ is unitary.
Direct computations show the following basic properties:
\begin{equation}
L_{\omega}^*=L_{\omega^{-1}} \quad \text{and} \quad L_{{\omega}^{-1}} \circ L_{\omega}=M_{\chi_{D_{\omega}}}
\end{equation}
Also if $\omega=g_1g_2\ldots g_k$ then $L_{\omega}=L_{g_1}L_{g_2}\ldots L_{g_k}$.
These imply that:
\begin{equation}\label{normLg}
||L_{\omega}|| \leqslant 1
\end{equation}
Clearly $M_f$ is a multiplication operator and
\begin{equation}\label{normMf}
||M_f|| = ||f||_{\infty}
\end{equation}
The closure in the weak topology of the linear span of all products (compositions) made with the operators $L_{g}$ and $M_f$ is a von  Neumann algebra, denoted by $\mathcal{M }(\mathcal{R})$. When $\mathcal{R}$ is ergodic and (SP1) this algebra becomes a $II_1$-factor. Its trace is given by :
\[
\tau(T)=<T\delta_0,\delta_0>
\]
where $\delta_0$ is the characteristic function of the diagonal of $\mathcal{R}$.

\begin{example} If $\Gamma$ is a countable group then any free, ergodic action on a standard probability space give rise to an (SP1) equivalence relation on that space. Such a relation is generated for example by $g:X\rightarrow X$, $\forall g\in\Gamma$, however it may be done in a more efficient way in case $\Gamma $ possesses generators. In turn, there is always an action around: any countable group acts freely and ergodically on $X=\{0,1\}^{\Gamma}$ equipped with the product measure by means of the Bernoulli shifts. Many countable groups gives rise to treeable, ergodic equivalence relations such as those coming from actions of the free groups or the amenable ones. Moreover it is not necessary that the domains of the generators be all of $X$: there are groups of non-integer cost hence some of their generators must be defined on ''chopped off'' measurable pieces. For a detailed discussion on costs of groups and treeability, and more examples of (SP1) equivalence relations we refer the reader to \cite{Gab}. Finally we note that if the equivalence relation comes from the action of a countable group $\alpha:\Gamma\rightarrow \mbox{Aut}(X,\mu)$ then the Feldman-Moore construction yields the usual
cross-product von Neumann algebra $L^{\infty}(X)\rtimes_{\alpha}\Gamma$.
\end{example}
Going back to the determinant let us recall the following familiar properties:  \\
$$\mbox{For }  n\times n \mbox{ matrices } \tau=\mbox{Tr}/n: \, \Delta(T)=\sqrt[n]{|\det T|}$$
$$\Delta(ST)=\Delta(S)\Delta(T)$$
$$\Delta(S)=\Delta(|S|)=\Delta(S^*)$$
$$\Delta(U)=1\mbox{ where }U\mbox{ is unitary}$$
$$\Delta(\lambda I)=|\lambda|$$
Although the determinant is not necessarily continuous it is upper-semicontinuous both in the strong operator topology and the norm topology.
\par The following formula can be proven using the description of the spectral resolution of a multiplication operator and functional calculus (see the proof of theorem \ref{r2} below):
\begin{equation}\label{det of Mf}
\log \Delta (M_f) = \int_X \log|f|\, d\mu(x)
\end{equation}

The goal of the present work is to compute the Fuglede-Kadison determinant of the operator $T\in\mathcal{M(R)}$:

\begin{equation}\label{operator}
T=\sum_{i=1}^N M_{f_i}L_{g_i}.
\end{equation}
We will accomplish this under some restrictions. First we will recall the following proposition due to Deninger (see \cite{Den1}):

\begin{proposition}[Deninger]\label{Deninger}
For an operator $\Phi$ in a finite von Neumann algebra with a trace $\tau(1)=1$ the following formula holds:
\begin{equation}
\Delta(z-\Phi)=|z|,
\end{equation}
if the following two conditions are satisfied:
\begin{equation}\label{spectral radius}
r(\Phi)<|z|
\end{equation}
and:
\begin{equation}\label{zero trace}
\tau(\Phi^n)=0,\quad \forall\mbox{ }n\geq 1.
\end{equation}
where $r(\Phi)$ stands for the the spectral radius of the operator $\Phi$.
\end{proposition}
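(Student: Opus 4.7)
The plan is to reduce the claim to showing $\Delta(1-\Psi) = 1$ for $\Psi := \Phi/z$. Factoring out the scalar, $\Delta(z - \Phi) = \Delta(zI)\,\Delta(1 - \Phi/z) = |z|\,\Delta(1-\Psi)$ by multiplicativity and the identity $\Delta(\lambda I) = |\lambda|$ recalled above. The two hypotheses transfer cleanly: $r(\Psi) = r(\Phi)/|z| < 1$, and $\tau(\Psi^n) = z^{-n}\tau(\Phi^n) = 0$ for every $n \geq 1$.

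I would then consider the one-parameter family $T(t) = 1 - t\Psi$ for $t \in [0,1]$. Since $r(t\Psi) \leq r(\Psi) < 1$, each $T(t)$ is invertible with inverse given by the Neumann series $(1-t\Psi)^{-1} = \sum_{k \geq 0} t^k \Psi^k$, which converges in operator norm because $\|\Psi^k\|^{1/k} \to r(\Psi) < 1$. Using the differentiation rule
\begin{equation*}
\frac{d}{dt}\log \Delta(T(t)) = \mathrm{Re}\bigl(\tau(T(t)^{-1} T'(t))\bigr),
\end{equation*}
obtained by differentiating $\tfrac12 \tau \log(T^*T)$ and simplifying via the trace property, I would compute
\begin{equation*}
\frac{d}{dt}\log \Delta(1-t\Psi) = -\mathrm{Re}\,\tau\bigl((1-t\Psi)^{-1}\Psi\bigr) = -\mathrm{Re}\sum_{k \geq 0} t^k \tau(\Psi^{k+1}) = 0,
\end{equation*}
where swapping sum and trace is legitimate because $\tau$ is norm-continuous. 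Hence $\log \Delta(1-t\Psi)$ is constant on $[0,1]$; at $t = 0$ it equals $\log \Delta(1) = 0$, so $\Delta(1-\Psi) = 1$, completing the proof.

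The main obstacle is justifying the derivative identity for $\log \Delta$ along a smooth path of invertibles. This requires showing that $t \mapsto \log(T(t)^*T(t))$ is norm-differentiable inside the algebra, which one obtains via contour integration around the spectrum of the positive invertible $T(t)^*T(t)$ and differentiation under the integral sign. An alternative route that sidesteps differentiation altogether is to use holomorphic functional calculus to define $\log(1-\Psi) = -\sum_{n \geq 1} \Psi^n/n$ as a norm-convergent series under $r(\Psi) < 1$, and then invoke the general identity $\log\Delta(\exp A) = \mathrm{Re}\,\tau(A)$; either way, the zero-trace hypothesis annihilates every contribution and yields $\Delta(1-\Psi) = 1$.
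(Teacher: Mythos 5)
Your argument is correct, but note that the paper does not prove this proposition at all: it is quoted verbatim from Deninger's paper \cite{Den1}, so the only comparison available is with Deninger's original argument. Your reduction to $\Delta(1-\Psi)=1$ for $\Psi=\Phi/z$, via multiplicativity and $\Delta(zI)=|z|$ (both listed among the familiar properties in the paper, and applicable here since $r(\Phi)<|z|$ makes $z-\Phi$ invertible), is the standard first step, and both of your routes close the remaining gap legitimately. The path argument $T(t)=1-t\Psi$ is sound: invertibility on $[0,1]$ follows from $r(t\Psi)=t\,r(\Psi)<1$, the Neumann series converges in norm by the root test since $\|\Psi^k\|^{1/k}\to r(\Psi)<1$, the interchange of $\tau$ with the norm-convergent series is justified, and the derivative formula $\frac{d}{dt}\log\Delta(T(t))=\mathrm{Re}\,\tau\bigl(T(t)^{-1}T'(t)\bigr)$ along a $C^1$ path of invertibles is a known lemma going back to Fuglede and Kadison --- you correctly isolate it as the one analytic point requiring proof rather than silently assuming it. Your sketched alternative --- defining $\log(1-\Psi)=-\sum_{n\geq 1}\Psi^n/n$ by holomorphic functional calculus (norm-convergent because $r(\Psi)<1$, and satisfying $\exp(\log(1-\Psi))=1-\Psi$ since $\sigma(\Psi)$ lies in the open unit disc) and then invoking the Fuglede--Kadison identity $\Delta(\exp A)=\exp(\mathrm{Re}\,\tau(A))$ --- is essentially Deninger's own proof, and is the cleaner of the two since it avoids any differentiability question: the hypothesis $\tau(\Psi^n)=0$ annihilates every term of $\tau(\log(1-\Psi))$ at once. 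Either route yields $\Delta(1-\Psi)=1$, hence $\Delta(z-\Phi)=|z|$, so the proposal is complete modulo standard cited facts.
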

\begin{remark}
The case $n=2$ in (\ref{operator}) can be dealt with by following the methods in \cite{Den1} provided that $g_1$ and $g_2$ are full Borel isomorphisms, that is $g_i:X\rightarrow X$. Assuming further that $g_1^{-1}g_2$ or $g_2^{-1}g_1$ is ergodic then one can embed the calculation of the determinant in the hyperfinite $II_1$-factor generated by the $\mathbf{Z}$-action of $g_1^{-1}g_2$, or $g_2^{-1}g_1$ on the probability space $X$ (notice that the ergodicity of an equivalence relation does not guarantee that of a subrelation). Thus our results will be relevant for the case $n\geqslant 3$. Moreover we will allow all of the $g_i$ but one to be Borel partial isomorphisms. However this level of generality has a slight drawback in the fact that we will impose some requirements on the domains of either the  $f_i$ or $g_i$. 
\end{remark}
\section{Main Results}
We will first need the following simple but important observation:
\begin{lemma}
Let $g:A\rightarrow B$ a Borel partial isomorphism and denote by $fg^{-1}$ the function $(fg^{-1})(x)=f(g^{-1}x)$. Then we have:

\begin{equation} \label{commutation}
L_{g}M_f=M_{fg^{-1}}L_{g}
\end{equation}

\begin{equation} \label{pow}
(M_fL_g)^n=M_{f\cdot fg^{-1}\cdot ...\cdot fg^{-(n-1)}}L_{g^n}
\end{equation}
\end{lemma}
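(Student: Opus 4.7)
The plan is to verify both identities by evaluating each side on an arbitrary $\Psi\in L^2(\mathcal{R})$ and using the defining formulas (1.1) and (1.2). For the commutation relation (\ref{commutation}), applying (1.1) and then (1.2) gives
\[
(L_g M_f \Psi)(x,y)=\chi_{D_g}(x)\,(M_f\Psi)(g^{-1}x,y)=\chi_{D_g}(x)\,f(g^{-1}x)\,\Psi(g^{-1}x,y),
\]
while applying them in the opposite order yields
\[
(M_{fg^{-1}} L_g\Psi)(x,y)=f(g^{-1}x)\,\chi_{D_g}(x)\,\Psi(g^{-1}x,y).
\]
The only subtlety is that $fg^{-1}$ is only defined on $B=D_g$; but this is harmless since the factor $\chi_{D_g}(x)$ from (1.1) annihilates the expression outside $B$, so one may extend $fg^{-1}$ by zero off $B$ without changing either side.

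For the power formula (\ref{pow}), I would induct on $n$. The base case $n=1$ is immediate. For the inductive step, write
\[
(M_f L_g)^{n+1}=(M_f L_g)^n (M_f L_g)=M_{h_n}\,L_{g^n}\,M_f\,L_g,\qquad h_n=f\cdot fg^{-1}\cdots fg^{-(n-1)},
\]
using the induction hypothesis. The key move is to push $L_{g^n}$ past $M_f$, which requires the iterated version of (\ref{commutation}), namely $L_{g^n}M_f=M_{fg^{-n}}L_{g^n}$. This follows either by applying (\ref{commutation}) $n$ times to the factorization $L_{g^n}=L_g^n$ (guaranteed by the concatenation property stated after (1.3)), or directly by the same pointwise computation as above using the formula (1.1) for the word $g^n$. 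Substituting and using $L_{g^n}L_g=L_{g^{n+1}}$ gives
\[
(M_f L_g)^{n+1}=M_{h_n}\,M_{fg^{-n}}\,L_{g^{n+1}}=M_{h_n\cdot fg^{-n}}\,L_{g^{n+1}}=M_{h_{n+1}}\,L_{g^{n+1}},
\]
which closes the induction.

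There is no serious obstacle here; the entire content is bookkeeping of the partial-isomorphism domains. The characteristic functions built into (1.1) ensure that each intermediate expression is supported on the correct set $D_{g^k}$, so the compositions are well-defined elements of $\mathcal{M}(\mathcal{R})$ and no separate argument about domains is required.
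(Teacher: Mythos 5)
Your proof is correct and takes essentially the same route as the paper: the same pointwise verification of (\ref{commutation}) on an arbitrary $\Psi\in L^2(\mathcal{R})$, while your induction via the iterated relation $L_{g^n}M_f=M_{fg^{-n}}L_{g^n}$ is just a formalized version of the paper's one-line argument, which writes $(M_fL_g)^n=M_f(L_gM_f)^{n-1}L_g$ and applies (\ref{commutation}) repeatedly. Your explicit remark that $fg^{-1}$ is only defined on $B$ and may be extended by zero is a harmless refinement the paper leaves implicit.
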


\begin{proof}
This follows from a direct computation:
\[
(L_{g}M_f)(\Psi)(x,y)=L_{g}(f(\cdot)\Psi(\cdot,\cdot))(x,y)=\chi_{B}(x)f(g^{-1}x)\Psi(g^{-1}x,y)
\]
On the other hand:
\begin{align*}
(M_{fg^{-1}}L_{g})(\Psi)(x,y)&=M_{fg^{-1}}(\chi_{B}(\cdot)\Psi(g^{-1}\cdot,\cdot))(x,y)\\
                                 &=f(g^{-1}x)\chi_{B}(x)\Psi(g^{-1}x,y)
\end{align*}
Next, we can write
$(M_fL_g)^n=M_f (L_gM_f)^{n-1}L_g$.
Now (\ref{pow}) follows by applying (\ref{commutation}) repeatedly.
\end{proof}

\begin{proposition}\label{trace}
Let  $\Lambda$ be a treeing and assume that the equivalence relation $\mathcal{R}_{\Lambda}$  is ergodic. If $N$ is a positive integer and $g_i\in\Lambda$, $f_i\in L^{\infty}(X)$  $\forall\mbox{} i\in\{1,...,N\}$, then for $\Phi=\sum_{i=1}^N M_{f_i}L_{g_i}$ we have

\[
\tau(\Phi^n)=0,\quad\forall n\geqslant 1
\]
\end{proposition}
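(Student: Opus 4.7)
The plan is to expand $\Phi^n$ multinomially and reduce each of the $N^n$ monomials to a single normalized term $M_h L_\omega$ using the commutation identity $L_g M_f = M_{fg^{-1}} L_g$ from the preceding lemma. A generic summand
\[
M_{f_{i_1}} L_{g_{i_1}} M_{f_{i_2}} L_{g_{i_2}} \cdots M_{f_{i_n}} L_{g_{i_n}}
\]
can be rewritten, by repeatedly pulling each $L_{g_{i_k}}$ past the $M_{f_j}$'s to its right, as $M_h L_\omega$ where $\omega = g_{i_1} g_{i_2} \cdots g_{i_n}$ and $h$ is a bounded function obtained as a product of the $f_{i_k}$'s precomposed with appropriate partial isomorphisms. (Boundedness of $h$ is immediate since each $f_{i_k}\in L^\infty$ and the $g$'s are measure-preserving, possibly with a cut-off by an indicator of some domain.)

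Next I would compute the trace of a single term $M_h L_\omega$ directly from the definition $\tau(T)=\langle T\delta_0,\delta_0\rangle$. Unwinding (1) and (2), one has $(M_h L_\omega \delta_0)(x,y) = h(x)\chi_{D_\omega}(x)$ if $y=\omega^{-1}x$ and $0$ otherwise. Pairing with $\delta_0(x,y)=\chi_{\{x=y\}}$ forces $x=y=\omega^{-1}x$, i.e. $\omega x = x$, so
\[
\tau(M_h L_\omega) = \int_{\{x\in D_\omega\,:\,\omega x = x\}} h(x)\, d\mu(x).
\]

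The decisive observation is that the word $\omega=g_{i_1}g_{i_2}\cdots g_{i_n}$ uses only positive exponents; hence no cancellations are possible and the condition on equal adjacent generators in the treeing definition is trivially met, so $\omega$ is automatically a non-trivial reduced word. The treeing hypothesis on $\Lambda$ then gives $\mu\{x:\omega x = x\}=0$, so the integral above vanishes. Summing over all $N^n$ monomials yields $\tau(\Phi^n)=0$.

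There is no real obstacle here beyond the bookkeeping involved in the commutation step; the conceptual point is simply that the monomials appearing in $\Phi^n$ produce only positive words in the generators, which are automatically reduced in the sense required by the treeing definition, and the treeing hypothesis then kills each term individually. Ergodicity is not used in the argument itself, though it is of course part of the standing assumption that guarantees $\mathcal{M}(\mathcal{R})$ is a $II_1$-factor with the stated trace.
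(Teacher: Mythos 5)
Your proof is correct and follows essentially the same route as the paper's: expand $\Phi^n$ multinomially, normalize each monomial to $M_hL_\omega$ via $L_gM_f=M_{fg^{-1}}L_g$, compute $\tau(M_hL_\omega)$ as an integral of $h$ over the fixed-point set of $\omega$, and kill it with the treeing hypothesis. Your explicit observation that the words arising here are positive, hence automatically reduced even when adjacent generators coincide, is precisely the justification the paper leaves implicit in the phrase ``since the equivalence relation is treeable.''
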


\begin{proof}
Notice that $\Phi^n$ will be a sum of terms having the format $M_{f_{i_1}}\circ L_{g_{i_1}}\circ \cdots \circ M_{f_{i_n}}\circ L_{g_{i_n}}$ where $i_1,\ldots ,i_n$ belong to the index set $\{1,2,\ldots,N\}$. Applying (\ref{commutation}) succesively we obtain that these terms are equal to an operator having the form
$M_hL_{g_1 g_2\ldots g_k}$, where $h=f_{i_1}\cdot f_{i_2}(g_{i_1}^{-1})\cdot f_{i_3}(g_{i_2}^{-1}g_{i_1}^{-1})\cdots f_{i_k}(g_{i_{k-1}}^{-1}\ldots g_{i_1}^{-1})$.
This shows it suffices to calculate the trace of the operators $M_hL_{\omega}$, where  $\omega=g_{i_1}g_{i_2}\ldots g_{i_k}$.
We have:
\begin{align*}
\tau(M_hL_{\omega})&=<M_hL_{\omega}\delta_0,\delta_0>=\int_X \sum_{z\sim x} \left( M_h(L_\omega \delta_0)(x,z)\overline{\delta_0(x,z)} \right) d\mu(x)\\
&=\int_X M_h(L_{\omega}\delta_0)(x,x)d\mu(x) = \int_{\{x \mid \omega(x)=x\} } h(x) d\mu(x)=0\\
\end{align*}
since the equivalence relation is treeable.
\end{proof}

\begin{teo}\label{r1} Let  $\Lambda$ be a treeing such that the equivalence relation $\mathcal{R}_{\Lambda}$  is ergodic. For $n\geq 1$ and $i\in\{1,\dots, n\}$  let $g_i\in\Lambda$, $f_i\in L^{\infty}(X)$, and $T=\sum_{i=1}^n M_{f_i}L_{g_i}$ in $\mathcal{M}(\mathcal{R}_{\Lambda})$.  Assume that there is an index $i_0$ such that
\begin{equation}\label{ine}
\sum_{i \neq i_0} || f_i/f_{i_0}||_{\infty}<1, 
\end{equation}
that $f_{i_0}$ is non-vanishing on sets of positive measure and that
\begin{equation}\label{nasty}
g_{i_0}: A_{i_0}=X \to B_{i_0}=X.
\end{equation}

Then:
\begin{equation}\label{detr1}
\log \Delta(T) = \int_X \log |f_{i_0}| d\mu(x)
\end{equation}
\end{teo}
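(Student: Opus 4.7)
The plan is to factor $T = M_{f_{i_0}} L_{g_{i_0}}(I - \Phi)$ for a suitable $\Phi \in \mathcal{M}(\mathcal{R}_\Lambda)$ and then apply Deninger's Proposition \ref{Deninger} with $z=1$ to conclude $\Delta(I-\Phi)=1$. Since $L_{g_{i_0}}$ is unitary by (\ref{nasty}), so $\Delta(L_{g_{i_0}})=1$, multiplicativity of $\Delta$ combined with formula (\ref{det of Mf}) will then yield (\ref{detr1}).

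Hypothesis (\ref{ine}), together with the assumption that $f_{i_0}$ is non-vanishing almost everywhere, guarantees that each quotient $f_i/f_{i_0}$ belongs to $L^\infty(X)$. Using (\ref{commutation}) to push $L_{g_{i_0}^{-1}}$ past $M_{f_i/f_{i_0}}$, together with $L_{g_{i_0}} L_{g_{i_0}^{-1}} = I$ (which holds because $g_{i_0}$ is full by (\ref{nasty})), one verifies directly that
\[
T = M_{f_{i_0}} L_{g_{i_0}}(I - \Phi), \qquad \Phi := -\sum_{i \neq i_0} M_{(f_i/f_{i_0})\circ g_{i_0}}\, L_{g_{i_0}^{-1} g_i}.
\]
Crucially, this factorization does not require $M_{f_{i_0}}$ itself to be invertible, only the existence of the bounded quotients. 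The estimate $\|\Phi\| \leqslant \sum_{i \neq i_0}\|f_i/f_{i_0}\|_\infty < 1$ is then immediate from (\ref{normLg}), (\ref{normMf}), and (\ref{ine}), giving in particular $r(\Phi)<1$.

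Next I would check that $\tau(\Phi^n)=0$ for all $n\geqslant 1$. Expanding $\Phi^n$ and applying (\ref{commutation}) repeatedly, as in the proof of Proposition \ref{trace}, produces a sum of operators of the form $M_h L_\omega$ with
\[
\omega = g_{i_0}^{-1} g_{i_1} g_{i_0}^{-1} g_{i_2} \cdots g_{i_0}^{-1} g_{i_n}, \qquad i_1,\ldots,i_n \in \{1,\ldots,N\}\setminus\{i_0\}.
\]
Viewed as a word in the generators of $\Lambda$, $\omega$ strictly alternates between $g_{i_0}^{-1}$ and $g_{i_k}$ with $i_k \neq i_0$, so consecutive letters are always distinct generators and no cancellation can occur: $\omega$ is a non-trivial reduced word. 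The treeing hypothesis then forces $\mu\{x:\omega(x)=x\}=0$, and the trace computation from Proposition \ref{trace} gives $\tau(M_h L_\omega)=0$.

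Proposition \ref{Deninger} now yields $\Delta(I-\Phi)=1$, and multiplicativity together with $\Delta(L_{g_{i_0}})=1$ and (\ref{det of Mf}) produces $\log\Delta(T)=\log\Delta(M_{f_{i_0}})=\int_X\log|f_{i_0}|\,d\mu$, as required. I expect the main subtle point to be the verification of the trace condition: one must check that the alternating pattern $g_{i_0}^{-1}, g_{i_k}, g_{i_0}^{-1}, g_{i_{k'}}, \ldots$ produced by the factorization really does produce reduced words in $\Lambda$. This is precisely where the full-domain hypothesis (\ref{nasty}) on $g_{i_0}$ is used --- it allows $L_{g_{i_0}} L_{g_{i_0}^{-1}} = I$ and thereby makes the factorization clean, so that the sandwiched $g_{i_0}^{-1}$'s appear in strict alternation with the remaining generators.
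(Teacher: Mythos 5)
Your proof is correct and follows essentially the same route as the paper: the same factorization $T=M_{f_{i_0}}L_{g_{i_0}}(I\pm\Phi)$ (the sign convention is immaterial), the same norm estimate giving $r(\Phi)<1$, and the same application of Proposition \ref{Deninger} with $z=1$ together with multiplicativity of $\Delta$ and formula (\ref{det of Mf}). The one place you diverge is to your credit: the paper simply cites Proposition \ref{trace} for the trace condition even though $\Phi$ is built from the words $g_{i_0}^{-1}g_i$ rather than from generators in $\Lambda$, whereas you explicitly verify that the alternating words $g_{i_0}^{-1}g_{i_1}g_{i_0}^{-1}g_{i_2}\cdots g_{i_0}^{-1}g_{i_n}$ with $i_k\neq i_0$ are non-trivial reduced words, which is exactly the check needed to make that citation airtight.
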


\begin{proof}
Using (\ref{commutation}) we can rewrite the operator $T$ as:

\begin{equation}\label{factor}
T=M_{f_{i_0}}L_{g_{i_0}}\left[I+\sum_{i=1}^n M_{f_i/f_{i_0}g_{i_0}} L_{g_{i_0}^{-1}g_i} \right],
\end{equation}
where condition (\ref{nasty}) was used to ensure that $L_{g_{i_0}}L_{{g^{-1}_{i_0}}}=I$.\\
Notice that the operator $\Phi=\sum_{i \neq i_0}^n M_{f_i/f_{i_0}g_{i_0}} L_{g_{i_0}^{-1}g_i}$  satisfies (\ref{zero trace}) by Proposition \ref{trace}. Notice also that the spectral radius of $\Phi$ satisfies:
\begin{align*}
r(\Phi) &\leqslant \sum_{i \neq i_0} ||M_{f_i/f_{i_0}g_{i_0}}||\cdot ||L_{g_{i_0}}||\cdot ||L_{g_i}||\\
        &\leqslant  \sum_{i \neq i_0}||M_{f_i/f_{i_0}g_{i_0}}||\\
        & \leqslant \sum_{i \neq i_0} || f_i/f_{i_0}||_{\infty}<1
\end{align*}
where we used \ref{normLg} and \ref{normMf}. So \ref{spectral radius} is satisfied and we can apply \ref{Deninger} with $z=1$ to conclude that
\[
\Delta(I+\Phi) =1
\]
Taking the determinant on both sides of (\ref{factor}) we obtain (\ref{detr1}).
\end{proof}
\begin{remark}
In \cite{Den1} the equivalence relation $\mathcal{R}$ comes from the action of a single ergodic automorphism $\gamma:X\rightarrow X$ and among other things a formula for $\Delta(T)$ is found when $n=2$. This is done without the requirement (\ref{ine}). It would be interesting to extend the calculation of $\Delta(T)$ beyond the restriction (\ref{ine}).
\end{remark}
\noindent For our next result we will consider Borel partial isomorphisms \\
$g_i:A_i\rightarrow B_i$ that do not generate unitaries in $\mathcal{M(R)}$. In particular we will not be able to proceed as in Theorem \ref{r1}. However under some restrictions on the domains of the partial isomorphisms a calculation is still possible and even without the treeability assumption.

\begin{teo}\label{r2}
Let $f_i\in L^{\infty}(X)$ and $g_i:A_i\rightarrow B_i$, $i=1,..,n$ be Borel partial isomorphisms in the standard probability space  $(X,\mathcal{B}, \mu)$ such that the $g_i$ are among the generators of an (SP1) equivalence relation.
If the following conditions are satisfied
\begin{align}\label{s1}
\mu(A_1\cup A_2\ldots\cup A_n)=1
\end{align}
\begin{align}\label{s2}
\mu(B_i\cap B_j)=0\mbox{ if } i\neq j
\end{align}
then the Fuglede-Kadison determinant of the operator $T=\sum_{i=1}^n M_{f_i}L_{g_i}$ is given by
\begin{equation}\label{detr2}
\log \Delta(T) = \sum_{i=1}^{n}\int_{B_i} \log |f_{i}| d\mu(x)
\end{equation}

\end{teo}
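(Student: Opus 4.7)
The plan is to reduce the problem to the determinant of a multiplication operator by showing that $T^*T$ is itself a multiplication operator, and then invoking formula (\ref{det of Mf}). This sidesteps the treeability hypothesis needed in Theorem \ref{r1}: the essential disjointness (\ref{s2}) of the ranges $B_i$ forces the cross terms in $T^*T$ to vanish.

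First I would unpack $(T\Psi)(x,z)=\sum_i f_i(x)\chi_{B_i}(x)\Psi(g_i^{-1}x,z)$ from the definitions, then compute the adjoint by a direct inner product calculation (using that each $g_i$ preserves $\mu$) obtaining $(T^*\Phi)(u,z)=\sum_i \chi_{A_i}(u)\overline{f_i(g_iu)}\Phi(g_iu,z)$. Composing these to form $T^*T$ yields a double sum indexed by pairs $(i,j)$. The key observation is that the $(i,j)$-term with $i\neq j$ is supported on $\{u\in A_i : g_iu\in B_j\}$; but $u\in A_i$ already forces $g_iu\in B_i$, and $\mu(B_i\cap B_j)=0$ by (\ref{s2}), so these cross terms vanish almost everywhere. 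What survives is
\[
T^*T=M_h,\qquad h(u)=\sum_{i=1}^n \chi_{A_i}(u)\,|f_i(g_iu)|^2.
\]

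Before applying (\ref{det of Mf}), I would note a slightly hidden but automatic consequence of (\ref{s1}) and (\ref{s2}): measure preservation gives $\sum_i\mu(A_i)=\sum_i\mu(B_i)=\mu(\bigsqcup_i B_i)\leq 1$, while $\mu(\bigcup_i A_i)=1$ forces $\sum_i\mu(A_i)\geq 1$, so the $A_i$ are essentially disjoint as well. Consequently $h(u)=|f_{i(u)}(g_{i(u)}u)|^2$ for the a.e.-unique index $i(u)$ with $u\in A_{i(u)}$. Then $|T|=M_{\sqrt h}$, so the standard property $\Delta(T)=\Delta(|T|)$ combined with (\ref{det of Mf}) give
\[
\log\Delta(T)=\frac{1}{2}\int_X\log h(u)\,d\mu(u)=\frac{1}{2}\sum_{i=1}^n\int_{A_i}\log|f_i(g_iu)|^2\,d\mu(u),
\]
and the measure-preserving change of variables $x=g_i(u)$ in each piece converts this into (\ref{detr2}).

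The main obstacle I anticipate is cleanly justifying the cross-term cancellation and handling the possibility that some $|f_i|$ vanishes on a positive-measure subset of $B_i$; then $h$ vanishes on a set of positive measure and $T^*T$ fails to be invertible. In that case the integral expression for $\Delta$ still makes sense with value $-\infty$, and (\ref{det of Mf}) together with $\Delta(T)=\Delta(|T|)$ remain valid. This should be dispatched by verifying (\ref{det of Mf}) directly from the spectral resolution of the multiplication operator $M_{\sqrt h}$, whose spectral measure $\mu_{M_{\sqrt h}}$ is the pushforward $(\sqrt h)_*\mu$, as alluded to in the paper.
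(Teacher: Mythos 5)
Your proposal is correct and follows essentially the same route as the paper: derive the essential disjointness of the $A_i$ from (\ref{s1}), (\ref{s2}) and measure preservation, show the cross terms in $T^*T$ vanish so that $T^*T=M_h$ with $h=\sum_i\chi_{A_i}\,|f_i\circ g_i|^2$, compute $\Delta$ via the explicit spectral resolution of the multiplication operator, and change variables along each measure-preserving $g_i$. Your pointwise computation of $T^*$ and your explicit treatment of the case where $h$ vanishes on a set of positive measure are only cosmetic refinements of the paper's algebraic argument via $L_{g_i}^*L_{g_j}=0$.
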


\begin{proof}
First we will prove
\begin{equation}\label{s3}
\mu(A_i\cap A_j)=0\mbox{  if  }i\neq j
\end{equation}
Let $A:=\cup_{i=2}^nA_i$. We have:
\begin{align*}
1&\geqslant\mu(\cup_{i=1}^nB_i)=\sum_{i=1}^n\mu(B_i)=\sum_{i=1}^n\mu(A_i)\\
&\geqslant\mu(A_1)+\mu(A)=\mu(A_1\cup A)+\mu(A_1\cap A)\\
&=1+\mu(A_1\cap A)
\end{align*}
Hence $\mu(A_1\cap A)=0$ and (\ref{s3}) follows for $i=1$. Analogously we obtain it for all $i$. \\
Notice that the hypothesis $\mu(B_i\cap B_j)=0$ insures  $L_{g_i}^*L_{g_j}=0$ for $i\neq j$. Indeed, for $\psi\in L^2(\mathcal{R})$ :
$L_{g_i}^*L_{g_j}\psi(\cdot,\cdot)=\chi_{D_{g_j^{-1}g_i}}\psi(g_j^{-1}g_i\cdot,\cdot)=0$ because the  maximum domain of the word $g_j^{-1}g_i$ is  $D_{g_j^{-1}g_i}=g_i^{-1}(B_i\cap B_j)\cap A_i$. \\
Using this last observation and (\ref{commutation}) we show that $T^*T$ is a multiplication operator:\\
\begin{align*}
T^*T &=\sum_{i,j}L_{g_i}^*M_{\overline{f_i}f_j}L_{g_j}=\sum_{i,j}M_{\overline{f_i}f_jg_i}L_{g_i}^*L_{g_j}\\
&=\sum_iM_{|f_i|^2g_i}M_{\chi_{A_i}}=\sum_iM_{|f_i|^2g_i\cdot\chi_{A_i}}\\
&=M_{\sum_i |f_i|^2g_i\cdot\chi_{A_i}}
\end{align*}
Let's denote $h:=\sum_i |f_i|^2g_i\cdot\chi_{A_i}$. From the spectral theorem
\begin{align*}
\tau(\log{T^*T})=\int\log\lambda d\tau(E_{\lambda})
\end{align*}
where $(E_{\lambda})$ is the spectral resolution of the multiplication operator $M_h$. The spectral resolution has an explicit form in this case (see for example \cite{Co}): for any Borel set $A\subset\sigma(M_h)$, $E(A)=M_{\chi_{ h^{-1}A}}$ and $\tau(M_{\chi_{ h^{-1}A}})=\mu(h^{-1}A)$. Similar with the classic multiplication operators on $L^2(X)$ and the fact that on $\mathcal{M(R)}$ the trace is a vector-trace
we have
\begin{align*}
\int\log\lambda d\tau(E_{\lambda})=\int_X\log h(x)d\mu(x)
\end{align*}
Combining this with the definition of the determinant
\begin{align*}
2\log\Delta (T) = \tau(\log{T^*T})=\int_X\log h(x)d\mu(x)
\end{align*}
Because of (\ref{s1}) and (\ref{s3}) we can continue with
\begin{align*}
2\log\Delta (T) &= \sum_i\int_{A_i}\log h(x)d\mu(x)=\sum_i\int_{A_i}\log |f_ig_i(x)|^2d\mu(x)\\&=2\sum_i\int_{B_i}\log |f_i(x)|d\mu(x)
\end{align*}
Now (\ref{detr2}) follows.
\end{proof}
\begin{remark} The theorem above is true  if we switch (\ref{s1}) with (\ref{s3}). Similar arguments can be used to show that (\ref{s3}) and (\ref{s1}) imply (\ref{s2}).
\end{remark}

\noindent
\textbf{Acknowledgement}. The authors thank the referee for carefully reading this paper and making several helpful suggestions.

\end{document}